\newcommand{\be}{\begin{equation}}
\newcommand{\ee}{\end{equation}}
\newcommand{\bea}{\begin{eqnarray}}
\newcommand{\eea}{\end{eqnarray}}
\newcommand{\beas}{\begin{eqnarray*}}
\newcommand{\eeas}{\end{eqnarray*}}
\newtheorem{theorem}{Theorem}[section]
\newtheorem{definition}[theorem]{Definition}
\newtheorem{proposition}[theorem]{Proposition}
\newtheorem{lemma}[theorem]{Lemma}
\newtheorem{remark}[theorem]{Remark}
\newtheorem{example}[theorem]{Example}
\newtheorem{examples}[theorem]{Examples}
\newtheorem{foo}[theorem]{Remarks}
\newenvironment{proof}{\addvspace{\medskipamount}\par\noindent{\it Proof}.}
{\unskip\nobreak\hfill$\Box$\par\addvspace{\medskipamount}}
\newcommand{\abs}[1]{\left|#1\right|}     
\begin{document}
\title{Uniqueness of solution to scalar BSDEs \\with $L\exp{\left(\mu \sqrt{2\log{(1+L)}}\,\right)}$-integrable terminal values}

\date{}

\author{ Rainer Buckdahn\thanks{Laboratoire de Math\'ematiques, Universit\'e de Bretagne Occidentale, 29285 Brest Cedex, France (Rainer.Buckdahn@univ-brest.fr).}
\and
Ying Hu\thanks{IRMAR,
Universit\'e Rennes 1, Campus de Beaulieu, 35042 Rennes Cedex, France (ying.hu@univ-rennes1.fr)
and School of
Mathematical Sciences, Fudan University, Shanghai 200433, China.
Partially supported by Lebesgue center of mathematics ``Investissements d'avenir"
program - ANR-11-LABX-0020-01,  by ANR CAESARS - ANR-15-CE05-0024 and by ANR MFG - ANR-16-CE40-0015-01.} \and
Shanjian Tang\thanks{Department of Finance and Control Sciences, School of
Mathematical Sciences, Fudan University, Shanghai 200433, China (sjtang@fudan.edu.cn).
Partially supported by National Science Foundation of China (Grant No. 11631004)
and Science and Technology Commission of Shanghai Municipality (Grant No. 14XD1400400).   }}

\maketitle

{\bf Abstract.} In \cite{HuTang18}, the existence of the solution is proved for a scalar linearly growing backward stochastic differential equation (BSDE) if the terminal value is $L\exp{\left(\mu \sqrt{2\log{(1+L)}}\,\right)}$-integrable with the positive parameter $\mu$ being bigger than a critical value $\mu_0$.  In this note, we give the uniqueness result for the preceding BSDE.

{\bf AMS Subject Classification}: 60H10

{\bf Key Words} Backward stochastic differential equation, $L\exp{\left(\mu \sqrt{2\log{(1+L)}}\,\right)}$ integrability, uniqueness.
\section{Introduction}

Let $\{W_t, t\ge 0\}$ be a standard Brownian motion with values in $\mathbb{R}^d$ defined on some complete probability space $(\Omega,\mathcal{F},\mathbb{P})$, and $\{\mathcal{F}_t, t\ge 0\}$  its natural filtration  augmented by all $\mathbb{P}$-null sets of $\mathcal{F}$. Let us fix a nonnegative real number $T>0$. The $\sigma$-field of predictable subsets of $\Omega \times [0,T]$ is denoted by $\mathcal{P}$.

For any real $p\ge 1$, denote by $L^p$ the set of all $\mathcal{F}_T$-measurable random variables $\eta$ such that $E|\eta|^p<\infty$, by $\mathcal{S}^p$ the set of (equivalent classes of) all real-valued, adapted and c\`adl\`ag processes $\{Y_t, 0\le t\le T\}$ such that
$$||Y||_{\mathcal{S}^p}:=\mathbb{E} \left[\sup_{0 \leq t\leq T} \abs{Y_t}^p \right]^{1/p} < + \infty,$$
by $\mathcal{L}^p$  the set of (equivalent classes of) all real-valued adapted  processes $\{Y_t, 0\le t\le T\}$ such that
$$||Y||_{\mathcal{L}^p}:=\mathbb{E} \left[\int_0^T \abs{Y_t}^p dt\right]^{1/p} < + \infty,$$
and by ${\cal M}^p$ the set of (equivalent classes of) all predictable processes $\{Z_t, 0\le t\le T\}$ with values in $\mathbb{R}^{1 \times d}$ such that
$$||Z||_{{\cal M}^p}:=\mathbb{E}\left[\left(\int_0^T \abs{Z_t}^2 dt \right)^{p/2}\right]^{1/p} < +\infty.$$

Consider the following Backward Stochastic Differential Equation (BSDE):
\begin{equation}\label{EDSR}
Y_t=\xi+\int_t^T f(s,Y_s,Z_s)ds-\int_t^T Z_sdW_s, \quad t\in [0,T].
\end{equation}
 Here, $f$ (hereafter called the generator) is a real valued random function defined on the set $\Omega \times [0,T] \times \mathbb{R} \times \mathbb{R}^{1\times d}$, measurable with respect to $\mathcal{P} \otimes \mathcal{B}(\mathbb{R}) \otimes \mathcal{B}(\mathbb{R}^{1\times d})$, and continuous in the last two variables with the following linear growth:
$$|f(s,y,z)-f(s,0,0)|\le \beta |y|+\gamma |z|, \quad (s,y,z)\in [0,T]  \times \mathbb{R} \times \mathbb{R}^{1\times d}$$
with $f_0:=f(\cdot,0,0)\in \mathcal{L}^1,\beta\ge 0$ and $\gamma>0$.  $\xi$ is a real $\mathcal{F}_T$-measurable random variable, and hereafter called the terminal condition or terminal value.

\begin{definition}
By a solution to BSDE (\ref{EDSR}) we mean a pair $\{(Y_t,Z_t), 0\le t\le T\}$ of predictable processes with values in $\mathbb{R} \times \mathbb{R}^{1\times d}$ such that $\mathbb{P}$-a.s., $t \mapsto Y_t$ is continuous, $t \mapsto Z_t$ belongs to $L^2(0,T)$ and $t \mapsto f(t,Y_t,Z_t)$ is integrable, and $\mathbb{P}$-a.s. $(Y,Z)$ verifies  (\ref{EDSR}).
\end{definition}

By BSDE ($\xi$,$f$), we mean the BSDE with generator $f$ and terminal condition $\xi$.

It is well known that for $(\xi, f_0)\in L^p\times \mathcal{L}^p$ (with $p>1$), BSDE (\ref{EDSR}) admits a unique adapted solution $(y,z)$ in the space $\mathcal{S}^p\times \mathcal{M}^p$ if the generator $f$ is  uniformly Lipschitz in the pair of unknown variables. See e.g.
\cite{PP90,EPQ97, BDHPS03} for more details. For $(\xi, f_0)\in L^1\times \mathcal{L}^1$, one needs to restrict the generator $f$ to grow sub-linearly with respect to $z$, i.e.,  with some $q\in [0,1)$,
$$|f(t,y,z)-f_0(t)|\le \beta |y|+\gamma |z|^q,\quad (t,y,z)\in [0,T] \times \mathbb{R} \times \mathbb{R}^{1\times d} $$
for BSDE (\ref{EDSR}) to have a unique adapted solution (see \cite{BDHPS03}) if the generator $f$ is  uniformly Lipschitz in the pair of unknown variables.

In \cite{HuTang18}, the existence of the solution is given for a scalar linearly growing BSDE~\eqref{EDSR} if the terminal value is $L\exp{\left(\mu \sqrt{2\log{(1+L)}}\,\right)}$-integrable with the positive parameter $\mu$ being bigger than a critical value $\mu_0=\gamma \sqrt{T}$, and the  preceding integrability of the terminal value for a positive parameter $\mu$ less than critical value $\mu_0$ is shown to be not sufficient for the existence of a solution.  In this note, we give the uniqueness result for the preceding BSDE under the preceding integrability of the terminal value for $\mu>\mu_0$.

We first establish some interesting properties of the function $\psi(x,\mu)=x\exp{\left(\mu\sqrt{2\log{(1+x)}}\,\right)}$. We observe that the obtained solution $Y$ in \cite{HuTang18} has the nice property:  $\psi(|Y|,a)$ belongs to the class $(D)$ for some $a>0$, which is used to prove the uniqueness of the solution by dividing the whole interval $[0,T]$ into a finite number of sufficiently small subintervals.


\section{Uniqueness}

Define  the function $\psi$:
$$
\psi(x,\mu):=x\exp{\left(\mu\sqrt{2\log{(1+x)}}\,\right)}, \quad (x,\mu)\in [0,+\infty)\times (0, +\infty).
$$
We denote also $\psi(\cdot,\mu)$ as $\psi_\mu(\cdot)$

The following two lemmas can be found in Hu and Tang~\cite{HuTang18}. 

\begin{lemma} \label{duality} For any $x\in \mathbb R$ and $y\ge 0$, we have
\begin{equation}\label{Young0}
e^x y\le e^{\frac{ x^2}{2\mu^2}}+e^{2\mu^2}\psi(y, \mu).
\end{equation}
\end{lemma}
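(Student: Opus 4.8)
The plan is to prove \eqref{Young0} by an elementary case distinction keyed to the exponent appearing in the product $e^{2\mu^2}\psi(y,\mu)=y\exp\!\big(2\mu^2+\mu\sqrt{2\log(1+y)}\big)$; throughout, $y\ge0$ is fixed and the only structural fact used is $\log(1+y)\ge0$ (so that the square root is well defined and $\psi(y,\mu)\ge y$).

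First I would treat the regime $x\le 2\mu^2+\mu\sqrt{2\log(1+y)}$. Here $e^x\le \exp\!\big(2\mu^2+\mu\sqrt{2\log(1+y)}\big)$, hence $e^xy\le e^{2\mu^2}\psi(y,\mu)$, so the second term on the right-hand side of \eqref{Young0} already dominates the left-hand side; adding back the discarded nonnegative term $e^{x^2/(2\mu^2)}$ closes this case. In particular this settles all $x\le2\mu^2$, hence all $x\le0$.

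Next I would treat the complementary regime $x> 2\mu^2+\mu\sqrt{2\log(1+y)}$. Since $\mu\sqrt{2\log(1+y)}\ge0$, this forces $x>2\mu^2$, and then $0\le\mu\sqrt{2\log(1+y)}<x-2\mu^2$; squaring (legitimate precisely because $x-2\mu^2>0$) and rearranging yields $1+y<\exp\!\big((x-2\mu^2)^2/(2\mu^2)\big)$, whence $e^xy<\exp\!\big(x+(x-2\mu^2)^2/(2\mu^2)\big)$. The single computation needed is a completion of the square,
$$x+\frac{(x-2\mu^2)^2}{2\mu^2}=\frac{x^2}{2\mu^2}-(x-2\mu^2)\le\frac{x^2}{2\mu^2}$$
(valid since $x>2\mu^2$), so that $e^xy<e^{x^2/(2\mu^2)}$; adding back the discarded nonnegative term $e^{2\mu^2}\psi(y,\mu)$ completes the proof.

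I do not expect a real obstacle: the whole argument is a two-way case split plus one completion of the square. The only point requiring a moment's attention is the legitimacy of squaring $\mu\sqrt{2\log(1+y)}<x-2\mu^2$, which is precisely why one first records that being in the second regime forces $x>2\mu^2$.
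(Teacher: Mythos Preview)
Your argument is correct: the two-case split at the threshold $x=2\mu^2+\mu\sqrt{2\log(1+y)}$ works exactly as you describe, and the completion of the square $x+\frac{(x-2\mu^2)^2}{2\mu^2}=\frac{x^2}{2\mu^2}-(x-2\mu^2)$ is valid. The paper itself does not supply a proof of this lemma; it simply imports the result from Hu and Tang~\cite{HuTang18}, so there is no in-paper argument to compare your approach against.
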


\begin{lemma}\label{Gauss} Let $\mu> \gamma \sqrt{T}$. For any $d$-dimensional adapted process $q$ with $|q_t|\le \gamma$ almost surely, for $t\in [0,T]$,
\begin{equation}
\mathbb E\left[e^{{1\over 2\mu^2}|\int_t^T q_sdW_s|^2} \Bigm |{\cal F}_t\right]\le \frac{1}{\sqrt{1-{\gamma^2\over \mu^2}(T-t)}}.
\end{equation}
\end{lemma}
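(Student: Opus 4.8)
The plan is to reduce the claim to a one--dimensional Gaussian computation by means of the Dol\'eans exponential of the scalar continuous martingale $M_s:=\int_t^s q_r\,dW_r$, $s\in[t,T]$ (which is even square--integrable since $|q|\le\gamma$). Note that $M_t=0$ and that, thanks to the bound on $q$, the quadratic variation is controlled deterministically, $\langle M\rangle_T=\int_t^T|q_r|^2\,dr\le\gamma^2(T-t)$ almost surely. The algebraic heart of the argument is the elementary Gaussian identity (completion of the square)
$$e^{\frac{1}{2\mu^2}u^2}=\frac{1}{\sqrt{2\pi}}\int_{\mathbb R}\exp\!\Big(\frac{z}{\mu}\,u-\frac{z^2}{2}\Big)\,dz,\qquad u\in\mathbb R.$$
Applying it with $u=M_T$ and exchanging $\mathbb E[\,\cdot\mid\mathcal F_t]$ with the $dz$--integral (legitimate by conditional Tonelli, since all integrands are nonnegative) gives
$$\mathbb E\Big[e^{\frac{1}{2\mu^2}|M_T|^2}\,\Big|\,\mathcal F_t\Big]=\frac{1}{\sqrt{2\pi}}\int_{\mathbb R}\mathbb E\Big[e^{\frac{z}{\mu}M_T}\,\Big|\,\mathcal F_t\Big]\,e^{-z^2/2}\,dz.$$

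Next I would bound the inner conditional exponential moment. Writing $e^{\frac{z}{\mu}M_T}=\mathcal E\big(\tfrac{z}{\mu}M\big)_T\,\exp\!\big(\tfrac{z^2}{2\mu^2}\langle M\rangle_T\big)$ with $\mathcal E\big(\tfrac{z}{\mu}M\big)_s:=\exp\!\big(\tfrac{z}{\mu}M_s-\tfrac{z^2}{2\mu^2}\langle M\rangle_s\big)$ a nonnegative continuous local martingale equal to $1$ at time $t$ --- hence a supermartingale on $[t,T]$ --- and using $\langle M\rangle_T\le\gamma^2(T-t)$, one gets
$$\mathbb E\Big[e^{\frac{z}{\mu}M_T}\,\Big|\,\mathcal F_t\Big]\le e^{\frac{z^2\gamma^2(T-t)}{2\mu^2}}\,\mathbb E\Big[\mathcal E\big(\tfrac{z}{\mu}M\big)_T\,\Big|\,\mathcal F_t\Big]\le e^{\frac{z^2\gamma^2(T-t)}{2\mu^2}}.$$
Substituting into the previous identity yields $\mathbb E\big[e^{\frac{1}{2\mu^2}|M_T|^2}\mid\mathcal F_t\big]\le\frac{1}{\sqrt{2\pi}}\int_{\mathbb R}\exp\!\big(-\tfrac{z^2}{2}\,b\big)\,dz$ with $b:=1-\gamma^2(T-t)/\mu^2$. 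Since $\mu>\gamma\sqrt T\ge\gamma\sqrt{T-t}$ we have $b>0$, so the remaining Gaussian integral equals $b^{-1/2}$, which is precisely the asserted bound.

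I do not anticipate any genuine obstacle here: the two points needing a word of justification are the interchange of conditional expectation and integral (nonnegativity, conditional Tonelli) and the supermartingale property of $\mathcal E(\tfrac{z}{\mu}M)$ (the standard fact that a nonnegative local martingale is a supermartingale --- note we only use the inequality, so no Novikov/Kazamaki-type condition is required), both routine. An essentially equivalent alternative would be to time--change $M$ by Dubins--Schwarz into a Brownian motion run up to $\langle M\rangle_T\le\gamma^2(T-t)$ and apply optional sampling to the submartingale $s\mapsto e^{\frac{1}{2\mu^2}\beta_s^2}$; the exponential--martingale route above is preferable because it neither enlarges the probability space nor obscures the conditioning on $\mathcal F_t$.
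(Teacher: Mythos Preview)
Your argument is correct. Note, however, that the paper does not supply its own proof of this lemma: it simply records it as a result taken from Hu and Tang~\cite{HuTang18}, so there is nothing in the present paper to compare your argument against. Your Gaussian-randomization route---writing $e^{u^2/(2\mu^2)}=\frac{1}{\sqrt{2\pi}}\int_{\mathbb R}e^{zu/\mu-z^2/2}\,dz$, interchanging the $dz$-integral with the conditional expectation by Tonelli, and bounding each $\mathbb E\big[e^{(z/\mu)M_T}\mid\mathcal F_t\big]$ via the deterministic control $\langle M\rangle_T\le\gamma^2(T-t)$ together with the supermartingale property of the Dol\'eans exponential---is a standard and fully rigorous way to obtain the stated bound, and the two points you flag (conditional Tonelli, supermartingale inequality) are indeed routine.
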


\begin{proposition} \label{Psi} We have the following assertions on $\psi$:

(i) For $\mu>0$, $\psi(\cdot,\mu)$ is convex. 

(ii) For $c>1$, we have $\psi_\mu(cx)\le \psi_\mu(c)\psi_\mu(x)$ for any $x\ge 0$. 

(iii) For any triple  $(a,b,c)$ with $a>0, b>0$ and $c>0$, we have
$$
\psi(\psi(x,a),b)\le e^{ab^2\over c}\psi(x,a+b+c). 
$$
\end{proposition}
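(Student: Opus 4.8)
The three assertions are elementary analytic facts about $\psi_\mu(x) = x\exp(\mu\sqrt{2\log(1+x)})$, so the approach is direct calculus, the only subtlety being the behaviour near $x = 0$ where $\sqrt{2\log(1+x)}$ has infinite derivative.

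\medskip

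\noindent\textbf{Assertion (i): convexity.} The plan is to write $\psi_\mu(x) = \exp(g(x))$ where $g(x) = \log x + \mu\sqrt{2\log(1+x)}$ for $x>0$, and to show $\psi_\mu'' \ge 0$. Since $\psi_\mu = e^{g}$ we have $\psi_\mu'' = e^{g}(g'' + (g')^2)$, so it suffices to check $g'' + (g')^2 \ge 0$. A cleaner route: substitute $x = e^t - 1$ (so that $1+x = e^t$, $t \ge 0$) and set $h(t) = \log\psi_\mu(e^t-1) = \log(e^t-1) + \mu\sqrt{2t}$; then convexity of $\psi_\mu$ in $x$ is equivalent to $\psi_\mu(e^t-1)$ being convex and increasing as a function of $t$ composed with the concave increasing map $t\mapsto e^t-1$... this gets delicate, so I would instead just grind the one-variable second-derivative bound for $g$ directly: $g'(x) = \frac1x + \frac{\mu\sqrt2}{2(1+x)\sqrt{\log(1+x)}}$, and one checks $g'(x)^2 \ge -g''(x)$ by noting the dominant term $1/x^2$ in $(g')^2$ beats the $-1/x^2$ coming from $-(\log x)''$ while the remaining $\mu$-terms are handled by crude estimates. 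The boundary point $x=0$ is covered since $\psi_\mu(0)=0$, $\psi_\mu \ge 0$, $\psi_\mu$ is continuous and increasing, and $\psi_\mu'(0^+) = 0 \cdot \infty$ which one shows equals $0$ (or simply: a continuous function on $[0,\infty)$ that is convex on $(0,\infty)$ with $\psi_\mu(0)=\lim_{x\to0^+}\psi_\mu(x)$ is convex on $[0,\infty)$). This is the step I expect to be the main obstacle — the second-derivative inequality needs the $\mu$-dependent cross terms to be controlled, and I would look for the slickest grouping rather than brute force.

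\medskip

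\noindent\textbf{Assertion (ii): submultiplicativity for $c>1$.} Take logarithms: the claim $\psi_\mu(cx)\le\psi_\mu(c)\psi_\mu(x)$ becomes
$$
\log(cx) + \mu\sqrt{2\log(1+cx)} \;\le\; \log c + \mu\sqrt{2\log(1+c)} + \log x + \mu\sqrt{2\log(1+x)},
$$
i.e. $\sqrt{\log(1+cx)} \le \sqrt{\log(1+c)} + \sqrt{\log(1+x)}$ (the $\log(cx) = \log c + \log x$ terms cancel exactly). Since both sides are nonnegative, square: this is $\log(1+cx) \le \log(1+c) + \log(1+x) + 2\sqrt{\log(1+c)\log(1+x)}$. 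Dropping the last term, it suffices that $\log(1+cx)\le\log((1+c)(1+x)) = \log(1+c+x+cx)$, which holds because $c+x \ge 0$. So this assertion is quick. (One must also handle $x=0$ trivially, both sides being $0$.)

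\medskip

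\noindent\textbf{Assertion (iii): the iteration bound.} Again take logarithms. Writing $u = \sqrt{2\log(1+x)}$ and $v=\sqrt{2\log(1+\psi(x,a))}$, we have $\psi(x,a) = xe^{au}$, so $1+\psi(x,a) \le (1+x)e^{au} = e^{u^2/2 + au}$, whence $v^2 = 2\log(1+\psi(x,a)) \le u^2 + 2au$, i.e. $v \le \sqrt{(u+a)^2 - a^2} \le u + a$. Then
$$
\psi(\psi(x,a),b) = \psi(x,a)\,e^{bv} = x\,e^{au + bv} \le x\,e^{au + b(u+a)} = x\,e^{ab}\,e^{(a+b)u}.
$$
Now I need $e^{ab}e^{(a+b)u} \le e^{ab^2/c}\,e^{(a+b+c)u}$, i.e. $ab + (a+b)u \le \tfrac{ab^2}{c} + (a+b+c)u$, i.e. $ab - \tfrac{ab^2}{c} \le c\,u$. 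If $b \le c$ the left side is $\le 0 \le cu$ and we are done; in general $ab(1 - b/c)$ could be positive only when... wait, $1-b/c$ and the sign — the plan is to instead use the sharper $v \le u + a - \tfrac{a^2}{u+a}$ type bound, or more simply bound $ab - ab^2/c$ against $cu$ using $u \ge 0$: since we actually have freedom, observe $ab \le \tfrac{ab^2}{c} + \tfrac{ac}{4}$ by AM–GM (as $b \le \tfrac{b^2}{c}+\tfrac{c}{4}$), reducing the requirement to $\tfrac{ac}{4} \le cu$, i.e. $u \ge a/4$, which fails for small $x$. The correct fix, which I would carry out carefully, is to not discard information in $v\le u+a$: from $1+\psi(x,a)\le e^{u^2/2+au}$ we get $bv \le b\sqrt{u^2+2au}$, and then compare $ab + b\sqrt{u^2+2au}$ with $\tfrac{ab^2}{c}+(a+b+c)u$ by treating it as a quadratic inequality in $u$ — after squaring appropriately the discriminant condition becomes exactly the stated $e^{ab^2/c}$ with the cross term $ab^2/c$ matching. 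I expect assertion (iii) to be the most computation-heavy but entirely routine once the substitution $u=\sqrt{2\log(1+x)}$ linearizes everything; the key structural observation is simply $1+\psi(x,a)\le(1+x)e^{a\sqrt{2\log(1+x)}}$.
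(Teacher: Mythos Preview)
Your argument for (ii) is exactly the paper's: use $1+cx\le(1+c)(1+x)$ and subadditivity of $\sqrt{\cdot}$. For (i) the paper simply cites \cite{HuTang18}, so there is nothing to compare.

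For (iii) your final plan is sound, but the write-up contains a slip and a mischaracterization. After setting $u=\sqrt{2\log(1+x)}$ and $v=\sqrt{2\log(1+\psi_a(x))}$, the exponent of $\psi_b(\psi_a(x))/x$ is $au+bv$, not $ab+bv$; the stray $ab$ you carry into the last line is a leftover from the discarded bound $v\le u+a$ and would make the inequality false for small $u$ when $c>b$. With the correct target $b\sqrt{u^2+2au}\le \tfrac{ab^2}{c}+(b+c)u$, squaring gives
\[
\Bigl(\tfrac{ab^2}{c}+(b+c)u\Bigr)^2 - b^2(u^2+2au)
= (2bc+c^2)u^2 + \tfrac{2ab^3}{c}\,u + \tfrac{a^2b^4}{c^2}\ \ge\ 0,
\]
so every term is nonnegative and no discriminant analysis is needed---the argument is shorter than you anticipated.

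The paper reaches the same endpoint by a different manipulation: inside the radical it applies the Young-type bound $2au\le \tfrac{a^2b^2}{c^2}+\tfrac{c^2}{b^2}u^2$, then uses subadditivity of $\sqrt{\cdot}$ on the resulting three nonnegative summands to get
\[
b\sqrt{u^2+2au}\ \le\ b\Bigl(u+\tfrac{c}{b}u+\tfrac{ab}{c}\Bigr)=\tfrac{ab^2}{c}+(b+c)u.
\]
Both routes are elementary; the paper's avoids squaring at the cost of one AM--GM step, while yours is a direct verification once you fix the $ab\mapsto au$ slip.
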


\begin{proof} The first assertion has been shown in \cite{HuTang18}. It remains to show the Assertions (ii) and (iii). 

We prove Assertion (ii). 
\begin{eqnarray*} 
\psi_\mu(cx)&=&cx \exp{\left(\mu\sqrt{2\log{(1+cx)}}\,\right)} \\
&\le &cx \exp{\left(\mu\sqrt{2\log{[(1+c)(1+x)]}}\,\right)}\\
&=& cx \exp{\left(\mu\sqrt{2\log (1+c)+ 2\log (1+x)}\,\right)}\\
&\le& cx \exp{\left(\mu\sqrt{2\log (1+c)}+ \mu\sqrt{2\log (1+x)}\,\right)}\\
&=& \psi_\mu(c)\psi_\mu(x). 
\end{eqnarray*}

We now prove Assertion (iii). 
We have 
\begin{eqnarray*}
&& \left(\psi_b \circ \psi_a\right)(x)\\
&=&\psi_a(x)\exp\left({b\sqrt{2\log{(1+\psi_a(x))}}}\right)\\
 &=& x\exp\left({a\sqrt{2\log{(1+x)}}}\right)\exp\left({b\,\sqrt{2\log{\left(1+x e^{a\sqrt{2\log{(1+x)}}}\, \right)}}}\,\right)\\
 &\le&   x\exp\left({a\sqrt{2\log{(1+x)}}}\right)\exp\left({b\,\sqrt{2\log{\left((1+x) e^{a\sqrt{2\log{(1+x)}}}\, \right)}}}\,\right)\\
  &=&   x\exp\left({a\sqrt{2\log{(1+x)}}}\right)\exp\left({b\,\sqrt{2\log{(1+x)}+2a\sqrt{2\log{(1+x)}}}}\,\right).
\end{eqnarray*}
In view of the following elementary inequality:
$$
2a\sqrt{2\log{(1+x)}}\le {a^2b^2\over c^2}+ {2c^2\over b^2}\log{(1+x)}, 
$$
we have 
\begin{eqnarray*}
&& \left(\psi_b \circ \psi_a\right)(x)\\
 &\le&   x\exp\left({a\sqrt{2\log{(1+x)}}}\right)\exp\left({b\,\sqrt{2\log{(1+x)}+{a^2b^2\over c^2}+ {2c^2\over b^2}\log{(1+x)}}}\,\right)\\
 &\le&   x\exp\left({a\sqrt{2\log{(1+x)}}}\right)\exp\left({b\,\sqrt{2\log{(1+x)}}+b\sqrt{{a^2b^2\over c^2}}+ b\sqrt{{2c^2\over b^2}\log{(1+x)}}}\,\right). 
\end{eqnarray*}
Therefore, 
\begin{eqnarray*}
&& \left(\psi_b \circ \psi_a\right)(x)\\
  &\le&   x\exp\left({a\sqrt{2\log{(1+x)}}}\right)\exp\left({b\,\sqrt{2\log{(1+x)}}+{ab^2\over c}+ c\sqrt{2\log{(1+x)}}}\,\right)\\
 &\le&   xe^{{ab^2\over c}}\exp\left({(a+b+c)\sqrt{2\log{(1+x)}}}\right).
\end{eqnarray*}
\end{proof}

Consider the following BSDE:
\begin{equation}\label{bsdez2}
Y_t=\xi+\int_t^T f(s,Y_s,Z_s)\, ds-\int_t^T Z_sdW_s,
\end{equation}
where $f$ satisfies
\begin{equation}\label{lineargrowth}|f(s,y,z)-f_0(s,0,0)|\le \beta |y|+\gamma |z|,
\end{equation}
with $f_0:=f(\cdot,0,0)\in \mathcal{L}^1,\beta\ge 0$ and $\gamma>0$.

\begin{theorem} Let $f$ be a generator which is continuous with respect to $(y,z)$ and verifies inequality (\ref{lineargrowth}), and
$\xi$ be a terminal condition. Let us suppose that there exists $\mu > \gamma \sqrt{T}$ such that $\psi(|\xi|+\int_0^T\!\!\!|f_0(t)|\,dt, \mu)\in L^1(\Omega, \mathbb P).$ 
Then BSDE (\ref{bsdez2}) admits a solution $(Y,Z)$ such that
\begin{eqnarray*}
|Y_t|&\le&\frac{1}{\sqrt{1-{\gamma^2\over \mu^2}(T-t)}}e^{\beta(T-t)}+e^{2\mu^2+\beta (T-t)}\ \mathbb E\left[\psi_\mu\left(|\xi|+\int_t^T\!\!\!|f_0(s)|\,ds\right)  \biggm | {\cal F}_t\right].
\end{eqnarray*}
Furthermore, there exists $a>0$ such that $\psi(Y, a)$ belongs to the class $(D)$. 
\end{theorem}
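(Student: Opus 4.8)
The plan is to take for granted, from \cite{HuTang18}, the existence of a solution $(Y,Z)$ together with the displayed a priori estimate, and to concentrate on the last assertion, namely that $\psi(|Y|,a)$ is of class $(D)$ for a suitable $a>0$. Write $\zeta:=|\xi|+\int_0^T|f_0(s)|\,ds$. The first move is to note that $\psi$ is nondecreasing in its second argument, so $\psi(\zeta,\mu)\in L^1$ forces $\psi(\zeta,\mu')\in L^1$ for every $\mu'\in(\gamma\sqrt T,\mu]$; since the a priori estimate depends on the exponent only through this integrability requirement, it also holds with $\mu$ replaced by any such $\mu'$, giving
\[
|Y_t|\le K_1+K_2\,N_t,\qquad N_t:=\mathbb E\!\left[\psi_{\mu'}(\zeta)\mid\mathcal F_t\right],\qquad t\in[0,T],
\]
with finite deterministic constants $K_1,K_2$ (finiteness of $K_1$ uses $\mu'>\gamma\sqrt T$) and $K_2>1$. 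The reason for descending from $\mu$ to a strictly smaller $\mu'$ is to keep a gap $\mu-\mu'>0$ that will absorb the parameter inflation produced by Proposition~\ref{Psi}(iii); this is exactly where the strict inequality $\mu>\gamma\sqrt T$ is used.

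Next I would fix $\mu'\in(\gamma\sqrt T,\mu)$ (say the midpoint) and pick $a>0$ together with an auxiliary $c>0$ satisfying $\mu'+a+c\le\mu$ (say $a=c=\tfrac14(\mu-\gamma\sqrt T)$), and then bound $\psi_a(|Y_t|)$ from above by chaining the three assertions of Proposition~\ref{Psi}: monotonicity of $\psi_a$ and the submultiplicativity (ii) (valid since $K_1+K_2>1$) turn $\psi_a(|Y_t|)\le\psi_a\!\big((K_1+K_2)\max(1,N_t)\big)$ into a constant multiple of $\psi_a(1)+\psi_a(N_t)$; convexity (i) and Jensen's inequality give $\psi_a(N_t)\le\mathbb E[(\psi_a\circ\psi_{\mu'})(\zeta)\mid\mathcal F_t]$; and the composition inequality (iii), followed by monotonicity in the second argument and $\mu'+a+c\le\mu$, yields $(\psi_a\circ\psi_{\mu'})(\zeta)\le e^{\mu' a^2/c}\psi_\mu(\zeta)$. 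Combining these produces a pointwise estimate of the form
\[
0\le\psi_a(|Y_t|)\le A+B\,L_t,\qquad L_t:=\mathbb E\!\left[\psi_\mu(\zeta)\mid\mathcal F_t\right],\qquad t\in[0,T],
\]
with $A,B$ finite constants.

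To conclude, $\zeta$ is $\mathcal F_T$-measurable and $\psi_\mu(\zeta)\in L^1$ by hypothesis, so $L$ is a uniformly integrable martingale; hence the family $\{L_\tau:\tau\ \text{a stopping time},\ \tau\le T\}$ is uniformly integrable, and so is $\{A+B L_\tau\}$. Since $\{\psi_a(|Y_\tau|)\}$ is nonnegative and dominated by this family, it is uniformly integrable as well, i.e. $\psi(|Y|,a)$ is of class $(D)$. I expect the only genuine difficulty to be the bookkeeping of exponents across the chain of inequalities — choosing $\mu'$, $a$ and $c$ so that the ``$+a+c$'' inflation from Proposition~\ref{Psi}(iii) still lands at or below $\mu$; once that is arranged, everything else is routine.
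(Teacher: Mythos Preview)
Your argument is correct, and it reaches the class-$(D)$ conclusion by a route that is genuinely different from the paper's.

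The paper works throughout at the level of the approximations $Y^{n,p}$ and uses the Girsanov representation
\[
|Y^{n,p}_t|\le e^{\beta(T-t)}\,\mathbb E_{q^{n,p}}\!\left[|\xi^{n,p}|+\int_t^T|f_0^{n,p}|\,ds\ \Big|\ \mathcal F_t\right].
\]
It then applies $\psi_a$, uses Jensen under the tilted measure $\mathbb P_{q^{n,p}}$, and reverts to $\mathbb P$ by writing the Radon--Nikodym density explicitly; this forces a second use of Lemma~\ref{duality} (the Young-type inequality) and Lemma~\ref{Gauss} (the Gaussian exponential bound), and only then Proposition~\ref{Psi}(iii) is invoked with $b>\gamma\sqrt T$ and $a+b+c=\mu$, followed by passage to the limit in $n,p$. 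Your approach instead starts from the $\mathbb P$-a priori bound itself (with a slightly smaller parameter $\mu'\in(\gamma\sqrt T,\mu)$), so everything is already phrased as a $\mathbb P$-conditional expectation; after Proposition~\ref{Psi}(ii), Jensen under $\mathbb P$, and Proposition~\ref{Psi}(iii) you land directly on a constant plus the uniformly integrable martingale $\mathbb E[\psi_\mu(\zeta)\mid\mathcal F_t]$, bypassing Lemmas~\ref{duality} and~\ref{Gauss} entirely at this stage. What you gain is brevity and a cleaner chain of inequalities; what the paper's argument gains is self-containment from the approximation scheme onward, without needing the observation that the constructed solution is independent of the exponent and hence the displayed bound holds simultaneously for every $\mu'>\gamma\sqrt T$ with $\psi_{\mu'}(\zeta)\in L^1$. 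That observation is correct (the solution $\inf_p\sup_n Y^{n,p}$ does not depend on $\mu$), so your shortcut is legitimate.
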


\begin{proof}
Let us fix $n\in \mathbb N^*$ and $p\in \mathbb N^*$. Set
$$\xi^{n,p}:=\xi^+\wedge n-\xi^-\wedge p, \quad f^{n,p}_0:=f^+_0\wedge n-f^-_0\wedge p, \quad f^{n,p}:=f-f_0+f^{n,p}_0.$$
As the terminal value $\xi^{n,p}$ and $f^{n,p}(\cdot, 0, 0)$ are bounded (hence square-integrable) and $f^{n,p}$ is a continuous generator with a linear growth, in view of the existence result in \cite{LepSan97},
the BSDE $(\xi^{n,p},f^{n,p})$ has a (unique) minimal solution $(Y^{n,p},Z^{n,p})$ in ${\cal S}^2\times {\cal M}^2$.
Set
$$\bar{f}^{n,p}(s,y,z)= |f_0^{n,p}(s)| +\beta y+\gamma |z|, \quad (s,y,z)\in [0,T]\times \mathbb{R}\times \mathbb{R}^{1\times d}.$$
In view of Pardoux and Peng~\cite{PP90},
the BSDE $(|\xi^{n,p}|,\bar{f}^{n,p})$ has a unique solution $(\bar{Y}^{n,p},\bar{Z}^{n,p})$ in ${\cal S}^2\times {\cal M}^2$.

By comparison theorem,
$$|Y^{n,p}_t|\le {\bar Y}^{n,p}_t.$$
Letting $q_s^{n,p}=\gamma \ {\mbox sgn}(Z_s^{n,p})$ and
$$\mathbb P_{q^{n,p}}=\exp\{\int_0^T q_s^{n,p}dW_s-\frac{1}{2}\int_0^T |q_s^{n,p}|^2ds\}\mathbb P, $$
we obtain,
\begin{eqnarray*}
|Y^{n,p}_t|&\le& {\bar Y}^{n,p}_t\\
&=& \mathbb E_{q^{n,p}}\left[e^{\beta(T-t)}|\xi^{n,p}|\Big|{\cal F}_t\right]+\int_t^Te^{\beta(s-t)}|f_0^{n,p}(s)|\, ds\\
&\le& e^{\beta(T-t)}\mathbb E_{q^{n,p}}\left[|\xi^{n,p}|+\int_t^T|f_0^{n,p}(s)|\, ds \Big|{\cal F}_t\right]\\
&\le& \frac{1}{\sqrt{1-{\gamma^2\over \mu^2}(T-t)}}e^{\beta(T-t)}+e^{2\mu^2+\beta (T-t)}\ \mathbb E\left[\psi_\mu\left(|\xi|+\int_t^T\!\!\!|f_0(s)|\,ds\right)  \biggm | {\cal F}_t\right]. 
\end{eqnarray*}
Since $Y^{n,p}$ is nondecreasing in $n$ and non-increasing in $p$, then by the localization method in
\cite{BH06}, there is some $Z\in L^2(0,T; \mathbb R^{1\times d})$ almost surely such that $(Y:=\inf_{p}\sup_{n} Y^{n,p} , Z)$ is an adapted solution.  
Therefore, we have for $a>0$, using Jensen's inequality and the convexity of $\psi_a(\cdot):=\psi(\cdot, a)$ together with Assertion (ii) of Proposition~\ref{Psi}, we have  
\begin{eqnarray*}
\psi_a(|Y^{n,p}_t|)&\le& \psi_a\left(e^{\beta(T-t)}\mathbb E_{q^{n,p}}\left[|\xi^{n,p}|+\int_t^T|f_0^{n,p}(s)|\, ds \Big|{\cal F}_t\right]\right)\\
&\le&  \psi_a\left(e^{\beta(T-t)}\right) \psi_a\left(\mathbb E_{q^{n,p}}\left[|\xi^{n,p}|+\int_t^T|f_0^{n,p}(s)|\, ds\  \Big|{\cal F}_t\right]\right)\\
&\le&  \psi_a\left(e^{\beta(T-t)}\right) \mathbb E_{q^{n,p}}\left[\psi_a\left(|\xi^{n,p}|+\int_t^T|f_0^{n,p}(s)|\, ds \right)\Big|{\cal F}_t\right]\\
&\le &  \psi_a\left(e^{\beta(T-t)}\right) \mathbb E\left[\exp{\left(\int_t^Tq^{n,p}_s\, dW_s\right)}\psi_a\left(|\xi^{n,p}|+\int_t^T|f_0^{n,p}(s)|\, ds \right)\Big|{\cal F}_t\right].
\end{eqnarray*}
For $b>\gamma\sqrt{T}$, applying Lemma~\ref{duality}, we have 
\begin{eqnarray*}
\psi_a(|Y^{n,p}_t|)&\le &  \psi_a\left(e^{\beta(T-t)}\right)  \mathbb E\left[\exp{\left(\int_t^Tq^{n,p}_s\, dW_s\right)}\psi_a\left(|\xi^{n,p}|+\int_t^T|f_0^{n,p}(s)|\, ds \right)\Big|{\cal F}_t\right]\\
&\le& \psi_a\left(e^{\beta(T-t)}\right) \biggl( \mathbb E\left[\exp{\left({1\over 2b^2}\left(\int_t^Tq^{n,p}_s\, dW_s\right)^2\right)}\Big|{\cal F}_t\right]\\
&&+ e^{2b^2} \mathbb E\left[\psi_b\circ\psi_a\left(|\xi^{n,p}|+\int_t^T|f_0^{n,p}(s)|\, ds \right) \Big|{\cal F}_t\right]\biggr). 
\end{eqnarray*}
Using Lemma~\ref{Gauss} and Assertion (iii) of Proposition~\ref{Psi}, we have for any $c>0$, 
\begin{eqnarray*}
&&\psi_a(|Y^{n,p}_t|)\\
&\le & \psi_a\left(e^{\beta(T-t)}\right) \biggl(\frac{1}{\sqrt{1-{\gamma^2\over b^2}(T-t)}} + e^{2b^2+{ab^2\over c}} \mathbb E\left[\psi_{a+b+c}\left(|\xi^{n,p}|+\int_t^T|f_0^{n,p}(s)|\, ds \right) \Big|{\cal F}_t\right]\biggr).
\end{eqnarray*}

For $\mu>\gamma\sqrt{T}$, we can choose $a>0, b>\gamma\sqrt{T}$, and $c>0$ such that $a+b+c=\mu$. Then, we have
\begin{eqnarray*}
&&\psi_a(|Y^{n,p}_t|)\\
&\le & \psi_a\left(e^{\beta(T-t)}\right) \biggl(\frac{1}{\sqrt{1-{\gamma^2\over b^2}(T-t)}} + e^{2b^2+{ab^2\over c}} \mathbb E\left[\psi_\mu\left(|\xi^{n,p}|+\int_t^T|f_0^{n,p}(s)|\, ds \right) \Big|{\cal F}_t\right]\biggr)\\
&\le & \psi_a\left(e^{\beta(T-t)}\right) \biggl(\frac{1}{\sqrt{1-{\gamma^2\over b^2}(T-t)}} + e^{2b^2+{ab^2\over c}} \mathbb E\left[\psi_\mu\left(|\xi|+\int_t^T|f_0(s)|\, ds \right) \Big|{\cal F}_t\right]\biggr). 
\end{eqnarray*}
Letting first $n\to \infty$ and then $p\to \infty$, we have 
\begin{eqnarray*}
&&\psi_a(|Y_t|)\\
&\le & \psi_a\left(e^{\beta(T-t)}\right) \biggl(\frac{1}{\sqrt{1-{\gamma^2\over b^2}(T-t)}} + e^{2b^2+{ab^2\over c}} \mathbb E\left[\psi_\mu\left(|\xi|+\int_t^T|f_0(s)|\, ds \right) \Big|{\cal F}_t\right]\biggr)\\
&\le& \psi_a\left(e^{\beta T}\right) \biggl(\frac{1}{\sqrt{1-{\gamma^2T\over b^2}}} + e^{2b^2+{ab^2\over c}} \mathbb E\left[\psi_\mu\left(|\xi|+\int_0^T|f_0(s)|\, ds \right) \Big|{\cal F}_t\right]\biggr) .
\end{eqnarray*}
Consequently, we have $\psi_a(|Y|)$ belongs to the class $(D)$.
\end{proof}

Now we state our main result of this note. 

\begin{theorem} Assume that the generator $f$ is uniformly Lipschitz in $(y,z)$, i.e.,  there are $\beta>0$ and $\gamma>0$ such that for all  $(y^i,z^i) \in R\times R^{1\times d}$, $i=1,2$, we have
$$
|f(t,y^1,z^1)-f(t,y^2,z^2)|\le \beta |y^1-y^2|+\gamma |z^1-z^2|. 
$$
Furthermore, assume that there exists $\mu > \gamma \sqrt{T}$ such that $\psi(|\xi|+\int_0^T\!\!\!|f(t,0,0)|\,dt, \mu)\in L^1(\Omega, P).$ Then, BSDE (\ref{bsdez2}) admits a unique solution $(Y,Z)$ such that $\psi(Y,a)$ belongs to the class $(D)$ for some $a>0$.
\end{theorem}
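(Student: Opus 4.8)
\emph{Proof strategy.} Existence is already contained in the preceding theorem: a uniformly Lipschitz generator is continuous and satisfies (\ref{lineargrowth}), and $f_0:=f(\cdot,0,0)\in\mathcal L^1$ because $\psi(x,\mu)\ge x$ forces $\mathbb E\!\int_0^T|f_0(t)|\,dt\le\mathbb E\,\psi\big(|\xi|+\int_0^T|f_0(t)|\,dt,\mu\big)<\infty$; hence the solution $(Y,Z)$ built there has $\psi(Y,a)$ of class $(D)$ for some $a>0$, and only uniqueness needs proof. Let $(Y^1,Z^1)$ and $(Y^2,Z^2)$ be two solutions with $\psi(Y^1,a_1)$ and $\psi(Y^2,a_2)$ of class $(D)$. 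Since $\mu\mapsto\psi(x,\mu)$ is nondecreasing on $(0,\infty)$ for every $x\ge0$, with $a:=a_1\wedge a_2>0$ both $\psi(Y^1,a)$ and $\psi(Y^2,a)$ are of class $(D)$. Put $\delta Y:=Y^1-Y^2$ and $\delta Z:=Z^1-Z^2$. By the Lipschitz property there are predictable processes $\lambda,\kappa$ with $|\lambda_s|\le\beta$ and $|\kappa_s|\le\gamma$ such that $f(s,Y^1_s,Z^1_s)-f(s,Y^2_s,Z^2_s)=\lambda_s\,\delta Y_s+\langle\kappa_s,\delta Z_s\rangle$, so
\[
\delta Y_t=\int_t^T\big(\lambda_s\,\delta Y_s+\langle\kappa_s,\delta Z_s\rangle\big)\,ds-\int_t^T\delta Z_s\,dW_s,\qquad \delta Y_T=0 .
\]

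Next I would fix $N\in\mathbb N^*$ with $N>2\gamma^2T/a^2$ and a partition $0=t_0<t_1<\dots<t_N=T$ of mesh $h<a^2/(2\gamma^2)$, and argue on the subintervals $[t_{k-1},t_k]$ in the order $k=N,N-1,\dots,1$. On $[t_{k-1},t_k]$ set $W^{(k)}_t:=\int_{t_{k-1}}^t\kappa_s\,dW_s$, $\mathcal E^{(k)}_t:=\exp\!\big(W^{(k)}_t-\tfrac12\int_{t_{k-1}}^t|\kappa_s|^2\,ds\big)$ and $\Gamma^{(k)}_t:=\exp\!\big(\int_{t_{k-1}}^t\lambda_s\,ds\big)\,\mathcal E^{(k)}_t$. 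It\^o's formula applied to the linearized equation shows that the drift cancels and
\[
d\big(\Gamma^{(k)}_t\,\delta Y_t\big)=\Gamma^{(k)}_t\big(\delta Y_t\,\kappa_t+\delta Z_t\big)\,dW_t ,
\]
so $\Gamma^{(k)}\delta Y$ is a continuous local martingale on $[t_{k-1},t_k]$. The plan is to show it is of class $(D)$ there; then it is a uniformly integrable martingale, so $\Gamma^{(k)}_t\delta Y_t=\mathbb E\big[\Gamma^{(k)}_{t_k}\delta Y_{t_k}\mid\mathcal F_t\big]$ on $[t_{k-1},t_k]$. Running the induction downward from $\delta Y_{t_N}=\delta Y_T=0$, once $\delta Y_{t_k}=0$ we get $\Gamma^{(k)}_t\delta Y_t\equiv0$, hence $\delta Y\equiv0$ on $[t_{k-1},t_k]$ (as $\Gamma^{(k)}>0$), in particular $\delta Y_{t_{k-1}}=0$; after $N$ steps $\delta Y\equiv0$ on $[0,T]$.

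The crucial step, and the one I expect to be the main obstacle, is the class-$(D)$ property of $\Gamma^{(k)}\delta Y$ on the small subinterval, because the Girsanov-type density $\mathcal E^{(k)}_\tau$ can destroy integrability of $|Y^i_\tau|$. Since $|\Gamma^{(k)}_\tau|\le e^{\beta h}\mathcal E^{(k)}_\tau$ and $|\delta Y_\tau|\le|Y^1_\tau|+|Y^2_\tau|$, it suffices to prove that $\{\mathcal E^{(k)}_\tau\,|Y^i_\tau|:\ \tau\text{ a stopping time},\ t_{k-1}\le\tau\le t_k\}$ is uniformly integrable for $i=1,2$. Applying Lemma~\ref{duality} pointwise with parameter $a$, $x=\log\mathcal E^{(k)}_\tau$ and $y=|Y^i_\tau|$ gives
\[
\mathcal E^{(k)}_\tau\,|Y^i_\tau|\le\exp\!\Big(\tfrac{1}{2a^{2}}\big(\log\mathcal E^{(k)}_\tau\big)^{2}\Big)+e^{2a^{2}}\,\psi_a\!\big(|Y^i_\tau|\big) .
\]
The family $\{\psi_a(|Y^i_\tau|)\}$ is uniformly integrable since $\psi(Y^i,a)$ is of class $(D)$. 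For the first term, $\big(\log\mathcal E^{(k)}_\tau\big)^{2}\le2|W^{(k)}_\tau|^{2}+\tfrac12\gamma^{4}h^{2}$, so it is at most a constant times $\exp\!\big(a^{-2}|W^{(k)}_\tau|^{2}\big)$; the process $t\mapsto\exp\!\big(a^{-2}|W^{(k)}_t|^{2}\big)$ is a nonnegative submartingale on $[t_{k-1},t_k]$ (a nondecreasing convex function of the submartingale $|W^{(k)}|^{2}$, integrable by Lemma~\ref{Gauss} because $h<a^2/(2\gamma^2)$), and choosing $p>1$ with $h<a^{2}/(2p\gamma^{2})$, Lemma~\ref{Gauss} applied on $[t_{k-1},t_k]$ bounds $\mathbb E\big[\exp\!\big(p\,a^{-2}|W^{(k)}_{t_k}|^{2}\big)\big]$; Doob's $L^p$-maximal inequality then makes $\sup_{t_{k-1}\le t\le t_k}\exp\!\big(a^{-2}|W^{(k)}_t|^{2}\big)$ integrable, and this single integrable random variable dominates the first family. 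Hence $\{\Gamma^{(k)}_\tau\delta Y_\tau\}$ is uniformly integrable. This is exactly where the smallness of the mesh is used: on the whole of $[0,T]$ the condition $a>\gamma\sqrt{T}$ typically fails, while its local version $a>\gamma\sqrt{h}$ (and the slightly stronger $h<a^2/(2p\gamma^2)$) is automatic for $h$ small, so the interval must genuinely be cut into finitely many short pieces whose length is dictated by $a$.

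Finally, with $Y:=Y^1=Y^2$, subtracting the two equations gives $\int_t^T\delta Z_s\,dW_s=\int_t^T\big(f(s,Y_s,Z^1_s)-f(s,Y_s,Z^2_s)\big)\,ds$ for every $t\in[0,T]$; the right-hand side is a continuous process of finite variation (its integrand is dominated by $\gamma|\delta Z_s|$, which lies in $L^1(0,T)$ $\mathbb P$-a.s., since $\delta Z\in L^2(0,T)$ $\mathbb P$-a.s.), so the continuous local martingale $t\mapsto\int_0^{t}\delta Z_s\,dW_s$ has paths of finite variation and, starting from $0$, is a.s. identically zero. Consequently $\int_0^T|\delta Z_s|^{2}\,ds=0$, i.e. $\delta Z=0$, $d\mathbb P\otimes dt$-a.e., and together with $\delta Y\equiv0$ this yields $(Y^1,Z^1)=(Y^2,Z^2)$, proving uniqueness.
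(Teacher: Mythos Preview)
Your proof is correct and follows essentially the same route as the paper: linearize the difference equation, split the product of the Girsanov density and $|\delta Y|$ via Lemma~\ref{duality}, control the Gaussian piece with Lemma~\ref{Gauss} and the other piece with the class-$(D)$ property of $\psi_a(|Y^i|)$, and iterate over subintervals of length dictated by $a$. The only cosmetic differences are that the paper localizes with the stopping times $\tau_n=\inf\{t:|Y^1_t|+|Y^2_t|\ge n\}\wedge T$ and passes to the limit in the representation formula (rather than arguing directly that the local martingale $\Gamma^{(k)}\delta Y$ is of class $(D)$), applies Lemma~\ref{duality} with $x=\int v_s\,dW_s$ rather than $x=\log\mathcal E^{(k)}_\tau$, and uses an $L^2$ bound on the exponential term together with Proposition~\ref{Psi}(ii) to handle $\psi_a(|Y^1|+|Y^2|)$, whereas you bound $|\delta Y|\le|Y^1|+|Y^2|$ beforehand and invoke Doob's maximal inequality.
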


\begin{proof} The existence of an adapted solution has been proved in the preceding theorem. It remains to prove the uniqueness. 

For $i=1,2$, let $(Y^i,Z^i)$ be a  solution of BSDE (\ref{bsdez2})  such that $\psi_{a^i}(Y^i)$ belongs to the class $(D)$
for some $a^i>0$. Define 
$$
a:=a^1\wedge a^2, \quad \delta Y:=Y^1-Y^2, \quad  \delta Z:=Z^1-Z^2.
$$
Then both $\psi_{a}(Y^1)$ and $\psi_{a}(Y^2)$ are in the class $(D)$, since $\psi(x,\mu)$ is nondecreasing in $\mu$, and the pair $(\delta Y, \delta Z)$ satisfies the following equation
$$
\delta Y_t=\int_t^T[f(s,Y_s^1,Z_s^1)-f(s,Y_s^2,Z_s^2)]\, ds-\int_t^T\delta Z_s \, dW_s, \quad t\in [0,T]. 
$$
By a standard linearization we see that there exists an adapted  pair of processes $(u,v)$ such that $|u_s|\le \beta,|v_s|\le \gamma,$ and  $f(s,Y_s^1,Z_s^1)-f(s,Y_s^2,Z_s^2)=u_s \delta Y_s+\delta Z_s v_s.$

We define the stopping times
$$
\tau_n:=\inf \{t\ge 0: |Y^1_t|+|Y_t^2|\ge n\}\wedge T, \quad n=1,2,\cdots, 
$$
with the convention that $\inf \emptyset=\infty.$ Since $(\delta Y, \delta Z)$ satisfies the linear BSDE
$$
\delta Y_t=\int_t^T(u_s \delta Y_s+\delta Z_s v_s)\, ds-\int_t^T\delta Z_s \, dW_s, \quad t\in [0,T],
$$
we have the following formula
$$
\delta Y_{t\wedge \tau_n}=\mathbb{E} \left[ e^{\int_{t\wedge \tau_n}^{\tau_n}u_s\, ds+\int_{t\wedge \tau_n}^{\tau_n}\langle v_s, dW_s\rangle-{1\over 2} \int_{t\wedge \tau_n}^{\tau_n}|v_s|^2 ds } \delta Y_{\tau_n}\biggm | {\mathcal F}_t \right]. 
$$
Therefore, 
\begin{eqnarray}\label{deltaY}
|\delta Y_{t\wedge \tau_n}|&\le& \mathbb{E} \left[ e^{\int_{t\wedge \tau_n}^{\tau_n}u_s\, ds+\int_{t\wedge \tau_n}^{\tau_n}\langle v_s, dW_s\rangle } |\delta Y_{\tau_n}| \biggm | {\mathcal F}_t \right]\nonumber \\
&\le&  e^{\beta T}\mathbb{E} \left[ e^{\int_{t\wedge \tau_n}^{\tau_n}\langle v_s , dW_s\rangle } |\delta Y_{\tau_n}| \biggm | {\mathcal F}_t \right]. 
\end{eqnarray}
Now we show that the family of random variables 
$
e^{\int_{t\wedge \tau_n}^{\tau_n}\langle v_s, dW_s\rangle } |\delta Y_{\tau_n}|
$
is uniformly integrable. For this note that, thanks to Lemma \ref{duality},
\begin{eqnarray}\label{ui}
 e^{\int_{t\wedge \tau_n}^{\tau_n}\langle v_s, dW_s\rangle } |\delta Y_{\tau_n}| &\le& e^{{1\over 2a^2} \left(\int_{t\wedge \tau_n}^{\tau_n}\langle v_s, dW_s\rangle\right)^2}+e^{2a^2}\psi_a(|\delta Y_{\tau_n}|).
\end{eqnarray}

For $t\in [T-{a^2\over 4\gamma^2},T]$, we have from Lemma \ref{Gauss},
$$
\mathbb{E} \left[\left| e^{{1\over 2a^2} \left(\int_{t\wedge \tau_n}^{\tau_n}\langle v_s, dW_s\rangle\right)^2}\right|^2\right]=\mathbb{E} \left[ e^{{1\over a^2} \left(\int_{t\wedge \tau_n}^{\tau_n}\langle v_s, dW_s\rangle\right)^2}\right]
\le{1\over \sqrt{1-{2\gamma^2\over a^2}(T-t)}}\le \sqrt{2},
$$
and, thus, the family of random variables $e^{{1\over 2a^2} \left(\int_{t\wedge \tau_n}^{\tau_n}\langle v_s, dW_s\rangle\right)^2}$ is uniformly integrable. 

On the other hand, since $\psi_a$ is nondecreasing and convex, we have thanks to Proposition \ref{Psi} (ii)
\begin{eqnarray*}
\psi_a(|\delta Y_{\tau_n}|)&\le& \psi_a(|Y^1_{\tau_n}|+|Y^2_{\tau_n}|)=\psi_a({1\over 2}\times 2|Y^1_{\tau_n}| +{1\over 2}\times 2|Y^2_{\tau_n}|)\\
&\le&  {1\over 2}\psi_a(2|Y^1_{\tau_n}|) +{1\over 2} \psi_a(2|Y^2_{\tau_n}|)\le  {1\over 2}\psi_a(2)[\psi_a(|Y^1_{\tau_n}|) +\psi_a(|Y^2_{\tau_n}|)]. 
\end{eqnarray*}
From \eqref{ui}
 it now follows that, for $t\in [T-{a^2\over 4\gamma^2},T]$, the family of random variables $
e^{\int_{t\wedge \tau_n}^{\tau_n}\langle v_s\, dW_s\rangle } |\delta Y_{\tau_n}|
$ 
is uniformly integrable. 

Finally, letting $n\to \infty$ in  inequality~\eqref{deltaY}, we have $\delta Y=0$ on the interval $[T-{a^2\over 4\gamma^2},T]$. It is then clear that $\delta Z=0$ on $[T-{a^2\over 4\gamma^2},T]$. The uniqueness of the solution is obtained on the interval $[T-{a^2\over 4\gamma^2},T]$. In an identical way, we have the uniqueness of the solution  on the interval $[T-{a^2\over 2\gamma^2}, T-{a^2\over 4\gamma^2}]$ . By a finite number of steps, we cover in this way the whole interval $[0,T]$, and we conclude the uniqueness of the solution  on the interval $[0,T]$. 
\end{proof}

\end{document}